\newtheorem{lemma}{Lemma}
\newtheorem{theorem}[lemma]{Theorem}
\newtheorem{remark}[lemma]{Remark}
\numberwithin{equation}{section} \numberwithin{lemma}{section}
\newtheorem{example}[lemma]{Example}
\numberwithin{equation}{section}
\newcommand{\N}{\mathbb{N}}
\newcommand{\Z}{\mathbb{Z}}
\newcommand{\Q}{\mathbb{Q}}
\newcommand{\vektor}[1]{\mathbf{#1}}
\newcommand{\cv}{\vektor{c}}
\newcommand{\ev}{\vektor{e}}
\newcommand{\nv}{\vektor{n}}
\newcommand{\kv}{\vektor{k}}
\newcommand{\mv}{\vektor{m}}
\newcommand{\xv}{\vektor{x}}
\newcommand{\yv}{\vektor{y}}
\newcommand{\hv}{\vektor{h}}
\newcommand{\zv}{\vektor{z}}
\newcommand{\uv}{\vektor{u}}
\newcommand{\vv}{\vektor{v}}
\newcommand{\wv}{\vektor{w}}
\newcommand{\nullv}{\boldsymbol{0}}
\newenvironment{blau}{\color{blue}}{}
\newenvironment{gruen}{\color{green}}{}
\newenvironment{rot}{\color{red}}{}
\begin{document}

\title[Bilinear forms]{Upper bounds for integer solutions to a system of two bilinear forms}

\author[Eugen Keil]{Eugen Keil}

\address{Mathematical Institute \\
University of Oxford \\
Andrew Wiles Building \\
Radcliffe Observatory Quarter \\
Woodstock Road \\
Oxford \\
OX2 6GG \\
United Kingdom}

\email{Eugen.Keil@maths.ox.ac.uk}

\date{\today}

\subjclass[2010]{Primary 11D09; Secondary 11D72} 
%11D09 Quadratic and bilinear equations  
%11D72 Equations in many variables
\keywords{bilinear forms \and upper bound \and structure}

\maketitle

\begin{abstract}
We show that the number of integer solutions for a pair of bilinear equations 
in at least $2\times6$ variables has (up to logarithms) the expected upper bound unless there is
a structural reason why it is not the case.
\end{abstract}

\section{Introduction}

In the by now classical work, Birch \cite{Birch} provides a method to show that a system
of forms of degree $d$ has the expected number of solutions as long as the number of variables
is big enough compared to the dimension of the `singular locus'. If we have a system of $R$ forms
in $n$ variables and $V^*$ is the `singular locus', then the condition is
\begin{align*}
n-dim[V^*] > R(R+1)(d-1)2^{d-1}.
\end{align*}

In this paper we want to consider the case of two bilinear forms in $2s$ variables.
The condition in this case would be $2s-dim[V^*] > 12$. It can probably be improved
by recent work of Schindler \cite{Schindler} on Birch's theorem for bihomogeneous forms.

From a na\"ive point of view, it seems very strange that the method should give weaker results
if we find ourselves in more structured situations with a large singular locus,
like the case of two diagonal forms, where the singular locus is at least as big as $s$. 
On the other hand, the standard circle method approach is well suited to answer 
the diagonal problem in as few as $2 \times 4$ variables.

The question we are trying to answer is: Can we prove a result for `all' bilinear forms,
independent of the size of the singular locus?
We show that this is indeed the case if we restrict our attention to
upper bounds instead of asymptotic formulas.

\begin{theorem} \label{Main-thm}
Let $N \geq 2$ and $B_1, B_2 \in \Z^{s \times s}$ be two integer matrices with 
$B_1(\xv,\yv) =  \xv^T B_1 \yv$, $B_2(\xv,\yv) =  \xv^T B_2 \yv$ corresponding bilinear 
forms, then the number of solutions to the system $B_1(\xv,\yv)=B_2(\xv,\yv)=0$ with
$|x_i| \leq N, |y_i| \leq N$ is of order $O(N^{2s-4} (\log N)^{2})$ as long as we are not in
one of the two situations
\begin{enumerate}
\item rank$(\lambda B_1 + \mu B_2) \leq 5$ for all $\lambda,\mu \in \Z$ or
\item rank$(\lambda B_1 + \mu B_2) \leq 1$ for some $(\lambda,\mu) \neq (0,0)$.
\end{enumerate}
\end{theorem}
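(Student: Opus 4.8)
The plan is to fix $\yv$, bound the number of admissible $\xv$ by a lattice point count, and then sum over $\yv$; since $B_i(\xv,\yv)=\langle\xv,B_i\yv\rangle=\langle B_i^{\mathrm T}\xv,\yv\rangle$ and $\mathrm{rank}(B_i^{\mathrm T})=\mathrm{rank}(B_i)$, the roles of $\xv$ and $\yv$, and of the two excluded configurations, are interchangeable, and I use this freely. For fixed $\yv$ the admissible $\xv$ form $\Lambda(\yv)\cap[-N,N]^s$, where $\Lambda(\yv)=\{\xv\in\Z^s:\langle\xv,B_1\yv\rangle=\langle\xv,B_2\yv\rangle=0\}$ is a sublattice of $\Z^s$ of corank $r(\yv):=\dim_{\R}\langle B_1\yv,B_2\yv\rangle\in\{0,1,2\}$. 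I split the $\yv$-sum according to the value of $r(\yv)$. (Note that for $s\le 5$ condition (1) always holds, so the theorem has content only for $s\ge 6$, which I assume below.)

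The generic, and dominant, case is $r(\yv)=2$. Then $\Lambda(\yv)$ has rank $s-2$ and covolume $d(\yv)=\|B_1\yv\wedge B_2\yv\|/G(\yv)$, where $G(\yv)$ is the greatest common divisor of the $2\times 2$ minors of $[\,B_1\yv\mid B_2\yv\,]$; the standard bound for the number of lattice points of a sublattice of $\Z^s$ in a box gives a leading term $\ll N^{s-2}/d(\yv)$ together with contributions from the lower successive minima. For the leading term one needs $\sum_{\yv}N^{s-2}/d(\yv)\ll N^{2s-4}(\log N)^2$, which, after a dyadic decomposition of $d(\yv)$ (using $1\le d(\yv)\ll\|\yv\|^2\ll N^2$), summation by parts, and an elementary treatment of $G(\yv)$, reduces to the key counting estimate
\[
\#\bigl\{\yv\in\Z^s:\ \|\yv\|_\infty\le N,\ \|B_1\yv\wedge B_2\yv\|\le t\bigr\}\ \ll\ N^{s-2}\,t\,(\log N)^{O(1)}\qquad(1\le t\le N^2),
\]
one logarithm being absorbed here and in the treatment of $G(\yv)$, the other by the dyadic sum. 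The contributions of the lower successive minima, i.e.\ of $\yv$ for which $\Lambda(\yv)$ contains several short vectors, are handled by the same scheme together with the $\xv\leftrightarrow\yv$ symmetry: a short $\xv_0\in\Lambda(\yv)$ forces $\yv$ onto the analogous lattice attached to $\xv_0$ for the transposed forms, and re-summing in that order returns a bound of the correct shape.

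Proving the key estimate is the main obstacle, and it is here that the two excluded configurations enter. The set in question concentrates near the locus $V_{\le 1}=\{\yv:r(\yv)\le 1\}$, on which every $2\times 2$ minor of $[\,B_1\yv\mid B_2\yv\,]$ vanishes; since these minors are quadratic, a $\yv$ in the box with $\|B_1\yv\wedge B_2\yv\|\le t$ lies within distance $\ll t/\|\yv\|$ of $V_{\le 1}$ away from the smaller locus $V_0=\{\yv:B_1\yv=B_2\yv=0\}=\ker B_1\cap\ker B_2$, so the estimate follows from a tube count around $V_{\le 1}$, together with a separate cruder estimate near $V_0$, once we know $\dim V_{\le 1}\le s-2$ and $\dim V_0\le s-6$. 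The bound $\dim V_0\le s-6$ is immediate from the failure of (1): some integral $C=\lambda_0 B_1+\mu_0 B_2$ has $\mathrm{rank}(C)\ge 6$ and $V_0\subseteq\ker C$. For $V_{\le 1}$ one studies the pencil $\lambda B_1+\mu B_2$: if its generic member is non-singular, $V_{\le 1}$ is a finite union of subspaces $\ker(B_1-cB_2)$ together with $\ker B_2$, each of dimension $\le s-2$ by the failure of (2); if every member is singular one passes to the Kronecker normal form of the pencil and checks that a codimension-one component of $V_{\le 1}$ can arise only if some combination has rank $\le 1$ or all combinations have rank $\le 5$, i.e.\ only in one of the excluded cases. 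It is this pencil analysis that pins down the numerical thresholds $5$ and $1$.

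It remains to treat the degenerate ranges, where the counting is crude. When $r(\yv)=1$ we have $\yv\in V_{\le 1}$ and, for $\yv\notin V_0$, the admissible $\xv$ lie in a single hyperplane, so their number is $\ll N^{s-1}/\|\yv\|$; summing over $\yv\in V_{\le 1}$ with $\|\yv\|_\infty\le N$ and using $\dim V_{\le 1}\le s-2$ gives $\ll N^{s-1}\sum_{m\le N}m^{s-4}\ll N^{2s-4}$. When $r(\yv)=0$, i.e.\ $\yv\in V_0$, every $\xv$ is admissible, contributing at most $N^s\cdot N^{\dim V_0}\ll N^{2s-6}$. Combining the three cases with the symmetric estimates obtained by exchanging $\xv$ and $\yv$ yields the bound $O\bigl(N^{2s-4}(\log N)^2\bigr)$, as claimed.
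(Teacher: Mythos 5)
Your approach is genuinely different from the paper's. The paper normalizes $B_1$ to the identity bilinear form, classifies $B_2$ according to its \emph{off-rank} (Lemma~\ref{Rank1-structure}), and then runs an explicit, elementary elimination (Lemmas~\ref{lem-homog}, \ref{lem-divest}, \ref{lem-diag}, \ref{lem-sum}) through three structural cases plus a reduction from general $s$ to $s=6$. You instead fix $\yv$, count $\xv$ as lattice points of the sublattice $\Lambda(\yv)$, and transfer the whole problem to a counting estimate for $\yv$ with $B_1\yv,B_2\yv$ nearly proportional, governed by the geometry of the determinantal locus $V_{\le 1}$ and, behind it, by the Kronecker structure of the matrix pencil. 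Conceptually this is a perfectly reasonable geometry-of-numbers strategy, and the rank thresholds $5$ and $1$ do enter in the way you describe (failure of (1) forces $\dim V_0\le s-6$; failure of (2) forces $\dim\ker(\lambda B_1+\mu B_2)\le s-2$ for every member of the pencil). It sidesteps the explicit case analysis and would generalize cleanly to larger $s$ without the reduction argument of Section~\ref{Sec-deduction}.

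However, as written the proposal has gaps that are not cosmetic; they are exactly where the paper does its real work, and you have not done it.
First, the \emph{pencil analysis for singular pencils}: you assert that, when $\det(\lambda B_1+\mu B_2)\equiv0$, the Kronecker normal form ``checks that a codimension-one component of $V_{\le1}$ can arise only if some combination has rank $\le1$ or all combinations have rank $\le5$.'' This is a concrete claim with nontrivial content (e.g.\ for a pencil $L_\epsilon\oplus L_\eta^{\mathrm T}\oplus(\text{regular part})$ one must track how the $L_\epsilon$ column space sits inside $V_{\le1}$ and compare dimensions against the size of the regular part), and you give no argument. This is the structural heart of the proof — the analogue of the paper's Lemma~\ref{Rank1-structure} plus the three subsequent sections — and it cannot be left as an assertion.
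Second, the \emph{key counting estimate} and the associated lattice-point bounds are stated, not proved. The bound ``$\ll N^{s-2}/d(\yv)$ plus contributions from lower successive minima'' is not a theorem without a precise formulation (the count in a box is controlled by \emph{all} successive minima, and can exceed $N^{s-2}/d$ substantially); the ``tube count around $V_{\le1}$'' needs $V_{\le1}$ to be a finite union of linear subspaces meeting the minors transversally, which you only establish in the regular-pencil case; and the treatment of $G(\yv)$ (the gcd of the $2\times2$ minors) and of the lower minima via the $\xv\leftrightarrow\yv$ symmetry is a sketch, not an argument.
Third, in the $r(\yv)=1$ range your bound $\ll N^{s-1}/\|\yv\|$ for the number of admissible $\xv$ is not correct as stated: the relevant quantity is the covolume of $\Z^s\cap(B_1\yv)^\perp$, i.e.\ the Euclidean length of the \emph{primitive} vector underlying $B_1\yv$, not $\|\yv\|$, and one must add the lower-order term $\ll N^{s-2}$ coming from the case when the hyperplane lattice has a short vector. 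The final sum can likely still be made to close (one has to sum over the gcd of $B_1\yv$ as you do for $\yv$ itself), but the estimate as written is wrong and needs to be repaired before the $r(\yv)=1$ contribution is under control.

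In short: same theorem, different and in principle viable route, but the three places where you write ``one checks'', ``the standard bound gives'', and ``re-summing returns a bound of the correct shape'' are precisely the places that need proofs. Until the Kronecker analysis is carried out and the lattice-point/gcd estimates are made precise, this is an outline rather than a proof.
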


\begin{remark}
The number of solutions is bounded from below by $cN^{2s-4}$ for some $c>0$ by an averaging argument.
See also Lemma \ref{lem-low} below.
\end{remark}

The first exceptional case in Theorem \ref{Main-thm} does not give 
a sharp theoretical bound, but reflects a limitation in our methods. We would expect
that the same result holds with a three replacing the five (which would be best possible).

The second roughly corresponds to one of the equations being of the form 
$xy=0$, which cannot `save' two variables, 
as required in the theorem (see also Lemma \ref{lem-low}).
In this case our result is best possible.

Rank conditions as those that appear in our theorem are typical
in this line of work as can be seen in previous work of  
Schmidt \cite{Schmidt} and Dietmann \cite{Dietmann}, who deal with systems of general
quadratic forms.

Most of the following arguments will extend to general
systems of bilinear forms, but we feel that the methods and ideas are best presented
in the simplest case of two equations.\\

{\bf Acknowledgements:}\\
We would like to thank the mathematical institute at the University of Oxford for providing good working conditions. 
The author was supported by the EPSRC grant EP/J009458/1.

\section{Collecting the Tools}

Before we begin with stating the main lemmata of this work, we need
a few notational conventions.

As usual, we use O-notation and the Vinogradov notation $f \ll g$ to denote that
$|f| \leq C|g|$ for some $C > 0$.
In the same way, we say that the number of solutions $S(N)$ is \emph{essentially bounded}
by a quantity $T(N)$, if there is a $C>0$ such that $S(N) \leq T(CN)$ is a bound for all $N \in \N$.

Now we want to state the tools that we are going to use excessively throughout the paper. 
Most of them are simple results from linear algebra.

\begin{lemma}[Homogenisation] \label{lem-homog}
Let $A \in \Z^{s \times s}$, $\cv \in \Z^s$ and $y_i \in \Z, |y_i| \leq N$.
\begin{itemize}
\item[(i)] Let $A \yv=\cv$ be a system of inhomogeneous linear equations.
Then the number of solutions to this equation is essentially bounded by the number of
solutions to the homogeneous system $A \yv=\nullv$.
\item[(ii)] Let $A \yv=\nullv$ be a system of linear equations.
Then the number of solutions to this equation is essentially bounded by $N$ times
the number of solutions to the same system with $y_j=0$ for some $1\leq j \leq s$.
\item[(iii)] If the last $d$ entries of $A\yv$ don't depend on the variables
$y_1,\ldots,y_j$, then we can set the variables $y_{j+1},\ldots,y_s$ equal to zero
in the first $r-d$ equations of $A\yv = \nullv$ and obtain an 
essential upper bound for the number of solutions.
In other words: If $A$ is a upper triangular block matrix, we can change it into a diagonal
block matrix.
\end{itemize}
\end{lemma}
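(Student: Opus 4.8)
All three parts are soft facts about integer points in boxes intersected with lattices and their cosets, and the plan is to build everything on two elementary observations: a translate of a ``solution box'' of radius $N$ sits inside a solution box of radius $2N$, and a box count for a system that decouples into independent blocks factors as a product. For part (i), if $A\yv=\cv$ has no solution with $|y_i|\le N$ there is nothing to prove; otherwise fix one such solution $\yv_1$. Then $\yv\mapsto\yv-\yv_1$ injects the set of solutions of $A\yv=\cv$ in the box of radius $N$ into the set of solutions of $A\yv=\nullv$ in the box of radius $2N$, since $A(\yv-\yv_1)=\nullv$ and $|y_i-y_{1,i}|\le 2N$. Hence the inhomogeneous count at scale $N$ is bounded by the homogeneous count at scale $2N$, which is precisely what ``essentially bounded'' demands.

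For part (ii), let $\Lambda\subseteq\Z^s$ be the lattice of solutions of $A\yv=\nullv$ and fix an index $j$ (any choice will work). Partition the solutions with $|y_i|\le N$ according to the value of the coordinate $y_j$; since $|y_j|\le N$, there are at most $2N+1$ fibres. Each non-empty fibre is a single coset $\yv_0+\Lambda_j$ of the sublattice $\Lambda_j=\Lambda\cap\{y_j=0\}$; choosing $\yv_0$ inside the box and applying the translation trick from part (i), that fibre contributes at most $|\Lambda_j\cap[-2N,2N]^s|$ solutions. Summing over the at most $2N+1$ values of $y_j$ bounds the total by $3N\cdot|\Lambda_j\cap[-2N,2N]^s|$, which is essentially bounded by $N$ times the number of solutions with $y_j=0$. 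The one subtlety here is to phrase the reduction as a fibering over the single coordinate $y_j$, rather than as an explicit direct-sum decomposition $\Lambda=\Z\wv\oplus\Lambda_j$: the latter would force one to control the size of a complementary generator $\wv$, which need not be small.

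For part (iii), split $\yv=(\yv',\yv'')$ with $\yv'=(y_1,\dots,y_j)$ and $\yv''=(y_{j+1},\dots,y_s)$. By hypothesis the last $d$ equations involve only $\yv''$, say $R\yv''=\nullv$, while the first $r-d$ equations read $P\yv'+Q\yv''=\nullv$; the diagonalised system is $\{P\yv'=\nullv,\ R\yv''=\nullv\}$. Count solutions of $A\yv=\nullv$ in the box as a sum over the admissible $\yv''$ (those with $R\yv''=\nullv$ and $|y_i|\le N$); for each such $\yv''$ the inner count is the number of $\yv'$ with $P\yv'=-Q\yv''$ and $|y_i|\le N$, which by part (i) is at most the number of $\yv'$ with $P\yv'=\nullv$ and $|y_i|\le 2N$, uniformly in $\yv''$. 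The resulting product equals the count of solutions of the decoupled system $\{P\yv'=\nullv,\ R\yv''=\nullv\}$ in the box of radius $2N$, so this is an essential upper bound.

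I do not expect a genuine obstacle: part (i) is a one-line translation, part (iii) is bookkeeping layered on top of it, and in part (ii) the only thing one must get right is recognising that the correct mechanism is the fibering over a single coordinate (which keeps the implied constants absolute), after which the estimate is immediate.
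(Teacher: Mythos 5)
Your proof is correct and takes essentially the same route as the paper: part (i) by translating through a fixed solution, part (ii) by fibering over the single coordinate $y_j$ and applying part (i) to each fibre, and part (iii) by homogenising the first $r-d$ equations for each admissible $\yv''$. The lattice/coset language in (ii) is a harmless reformulation of the paper's ``fix $y_j$ and delete column $j$'' step, and your aside about avoiding an explicit direct-sum decomposition is a sensible observation but not a point where the paper's argument differs.
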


\begin{remark}
A bilinear system $\xv^T B_i \yv=0$ can always be thought of as a linear system
in $\yv$ by fixing the variables $\xv$ (or the other way around).
\end{remark}

\begin{proof}

For the first statement, we observe that for a given fixed solution $A \zv=\cv$
and any other solution $A \yv=\cv$ to the inhomogeneous linear equation, 
we obtain a solution $A (\yv-\zv)=\nullv$ with $\|\yv-\zv\|_{\infty} \leq 2N$.

For the second statement, we observe that by fixing $y_j$, we can rewrite 
$A\yv=\nullv$ into $B\yv'=\cv_j$, where $B$ is essentially $A$ but with missing column $j$ and
$\cv_j$ is $-y_j$ times the $j$th column of $A$. The result follows from part one and the
observation, that there are $O(N)$ choices for $y_j$.

The third statement is slightly more difficult. For any choice of values for
$\yv''=(y_{j+1},\ldots,y_s)$ that satisfy the last $d$ equations of $A\yv=\nullv$,
we can set $\yv'=(y_1,\ldots,y_j)$ and write the first $r-d$ equations in the form
$B \yv' = c(\yv'')$, where $B$ is the upper left submatrix of size $(r-d) \times j$. 
By part (i), this system is majorized by the system $B \yv' = 0$.
This homogenisation procedure doesn't affect the last $d$
equations since they are independent of $\yv'$.
\end{proof}

\begin{lemma}[Divisor estimates] \label{lem-divest}
An equation of the form $dx_1y_1=cx_2y_2$ with $c,d \neq 0$ has $O(N^2\log N)$
solutions with $|x_i|,|y_i| \leq N$.
\end{lemma}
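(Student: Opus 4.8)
\emph{Proof plan.} The plan is to count solutions according to the common value $m = dx_1y_1 = cx_2y_2$ and reduce everything to a standard divisor sum. First I would separate off the degenerate solutions in which some variable is zero: if, say, $x_1 = 0$, then $cx_2y_2 = 0$, so $x_2 = 0$ or $y_2 = 0$, and in either case only two of the four variables are free while the others are forced, so such solutions number $O(N^2)$; the same holds whenever any one of the four variables vanishes. Since $O(N^2)$ is absorbed into the claimed bound, it remains to treat solutions with $x_1,x_2,y_1,y_2$ all nonzero.

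For those, I would fix $x_1$ and $x_2$ (there are $O(N^2)$ choices) and view the remaining condition $(dx_1)y_1 = (cx_2)y_2$ as a single homogeneous linear equation in $(y_1,y_2)$. Its integer solution set is the rank-one lattice $\{\,t\,(cx_2/g,\; dx_1/g) : t \in \Z\,\}$ with $g = \gcd(dx_1, cx_2)$, and the number of $t$ for which both coordinates lie in $[-N,N]$ is
\begin{align*}
\ll 1 + \frac{Ng}{\max(|dx_1|,|cx_2|)} \ll 1 + \frac{N\gcd(x_1,x_2)}{\max(|x_1|,|x_2|)},
\end{align*}
where the implied constants depend only on $c$ and $d$ (using $\gcd(dx_1,cx_2)\mid cd\,\gcd(x_1,x_2)$ and $\max(|dx_1|,|cx_2|)\ge\max(|x_1|,|x_2|)$). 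Summing the $1$ over $x_1,x_2$ costs $O(N^2)$, and summing the main term costs $\ll N\sum_{1\le a,b\le N}\gcd(a,b)/\max(a,b)$.

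Finally I would estimate this last sum by the classical argument
\begin{align*}
\sum_{1\le a,b\le N}\frac{\gcd(a,b)}{\max(a,b)} \le 2\sum_{a\le N}\frac1a\sum_{b\le a}\gcd(a,b) \le 2\sum_{a\le N}\tau(a) \ll N\log N,
\end{align*}
since $\sum_{b=1}^{a}\gcd(a,b) = \sum_{e\mid a}e\,\varphi(a/e) \le \sum_{e\mid a}a = a\,\tau(a)$ and $\sum_{a\le N}\tau(a)\ll N\log N$. Combining the contributions gives $O(N^2) + O(N\cdot N\log N) = O(N^2\log N)$, as required. (Alternatively one can set $r_d(m) = \#\{(x_1,y_1) : dx_1y_1 = m,\ |x_1|,|y_1|\le N\}$ and likewise $r_c(m)$, bound the count by $\sum_m r_d(m)r_c(m) \le (\sum_m r_d(m)^2)^{1/2}(\sum_m r_c(m)^2)^{1/2}$ via Cauchy--Schwarz, and observe that $\sum_m r_d(m)^2$ simply counts solutions of $x_1y_1 = x_1'y_1'$, i.e. the same divisor sum.) I do not anticipate a genuine obstacle; the only points needing care are that $c$ and $d$ enter solely through implied constants and that the solutions with a vanishing variable are peeled off before fixing $x_1$ and $x_2$.
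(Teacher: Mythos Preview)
Your argument is correct and follows essentially the same route as the paper: fix $x_1,x_2$, count the $(y_1,y_2)$-solutions to the resulting homogeneous linear equation as $\ll N\gcd(x_1,x_2)/\max(|x_1|,|x_2|)$, and then bound the sum $\sum_{a,b\le N}\gcd(a,b)/\max(a,b)\ll N\log N$. The only cosmetic differences are that the paper disposes of $c,d$ up front by the Cauchy--Schwarz symmetrisation you mention as an alternative, and it evaluates the final double sum by substituting $d=\gcd(x_1,x_2)$ rather than via $\sum_{b\le a}\gcd(a,b)\le a\,\tau(a)$; neither choice changes the substance.
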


\begin{proof}
A Cauchy-Schwarz symmetrisation (see proof of Lemma \ref{lem-diag} below) 
reduces the problem to the form $x_1y_1=x_2y_2$. 
For $x_1 = 0$, the number of solutions is $O(N^2)$. We can therefore assume
that $x_1$ and $x_2$ are non-zero and positive. If we consider
this to be a linear equation in $y_1$ and $y_2$ and set $d = \gcd(x_1,x_2)$, 
then we can instead look at $u_1y_1=u_2y_2$, where $\gcd(u_1,u_2)=1$ and $u_i = x_i/d$.
This forces the divisibility conditions $u_1|y_2$ and $u_2|y_1$. Therefore, the
number of solutions to this linear diophantine equation is bounded by $(2N+1)/\max(u_1,u_2)$.
We obtain an essential upper bound of the form
\begin{align*}
N \sum_{1 \leq x_1,x_2 \leq N} \frac{\gcd(x_1,x_2)}{\max(x_1,x_2)}.
\end{align*}
Collecting the terms with equal greatest common divisor, we obtain
\begin{align*}
& N \sum_{d \leq N} \sum_{1 \leq u_1,u_2 \leq N/d} \frac{1}{\max(u_1,u_2)}
\leq 2 N \sum_{d \leq N} \sum_{1 \leq u_1 \leq u_2 \leq N/d} \frac{1}{u_2}\\
= & 2 N \sum_{d \leq N} \sum_{1 \leq u_2 \leq N/d} 1 
= 2 N \sum_{d \leq N} \frac{N}{d} \ll N^2 \log N.
\end{align*}
\end{proof}

The next lemma isn't strictly necessary for the argument, but 
simplifies the exposition.

\begin{lemma}[Coordinate change] \label{lem-coord}
If we set $\uv = K \xv$ for a matrix $K \in \Z^{s\times s}$ of full rank, 
then the resulting system $\uv K^{-1}B\yv = \nullv$ has a bigger upper bound, 
as long as we choose $C>0$ with
$|u_i| \leq CN$ in such a way that $[-CN,CN]^s$ covers the image of $[-N,N]$ by $K$. 
By multiplication with suitable integers, we can
also assume that the coefficients of the new system are integers. 
\end{lemma}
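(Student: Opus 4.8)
The plan is to read the substitution $\uv = K\xv$ as an explicit injection on the integer lattice and to keep track separately of what it does to the defining equations and to the box. First I would record that, since $K$ has full rank over $\Q$, the linear map $\xv \mapsto K\xv$ is injective on $\Q^s$ and hence on $\Z^s$, and that it sends integer vectors to integer vectors. Consequently, if $C>0$ is chosen so that $[-CN,CN]^s$ contains $K([-N,N]^s)$ — for instance $C = \max_{1\le i\le s}\sum_{j=1}^s |K_{ij}|$ works, and this choice is legitimate — then $\xv\mapsto \uv=K\xv$ is an injection from $\{\xv\in\Z^s: |x_i|\le N\}$ into $\{\uv\in\Z^s: |u_i|\le CN\}$.

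Next I would rewrite the equation under this substitution. From $\uv = K\xv$ we get $\xv = K^{-1}\uv$, hence $\xv^T B\yv = \uv^T (K^{-1})^T B\yv = \uv^T (K^T)^{-1}B\yv$, so the original bilinear form is carried to the form with the (a priori only rational) matrix $(K^T)^{-1}B$. To make the new system integral, I would multiply through by the nonzero integer $m=|\det K|$: since $m(K^T)^{-1} = \pm\,\mathrm{adj}(K^T)\in\Z^{s\times s}$, the matrix $B' := m(K^T)^{-1}B$ lies in $\Z^{s\times s}$, and because $m\neq 0$ we have $\uv^T B'\yv = 0 \iff \uv^T (K^T)^{-1}B\yv = 0 \iff \xv^T B\yv = 0$. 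Performing this for each of $B_1,B_2$ with the same $K$ and $m$ transforms the pair into a new integer pair with the same solution set after the change of variables.

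Combining the two observations: every solution $(\xv,\yv)$ of $\xv^T B\yv=0$ with $|x_i|,|y_i|\le N$ is sent injectively to a solution $(\uv,\yv)=(K\xv,\yv)$ of the integer system $\uv^T B'\yv=0$ with $|u_i|\le CN$, $|y_i|\le N$. Hence the number of solutions of the original system is at most the number of solutions of the transformed integer system over the enlarged box, which is precisely the claim that the new system "has a bigger upper bound" in the essential sense of the previous section, the harmless constant $C$ being absorbed into the notion of essential bound. I do not expect any genuine obstacle here; the only points needing a little care are verifying that such a $C$ exists so that the box truly contains the image $K([-N,N]^s)$, and clearing denominators correctly (via the adjugate, so that $B'$ is again an integer matrix) without changing the solution set.
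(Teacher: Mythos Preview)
Your argument is correct and is exactly the approach of the paper, which records only the single sentence ``Every solution in $\xv$ translates into a solution in $\uv$''; you have simply unpacked that line by making the injectivity of $\xv\mapsto K\xv$, the choice of $C$, and the denominator-clearing via the adjugate explicit. No changes are needed.
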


\begin{proof}
Every solution in $\xv$ translates into a solution in $\uv$.
\end{proof}

\begin{lemma}[Lower bound] \label{lem-low}
A system $\xv^TB_j\yv=0$ with $1 \leq j \leq r$ in $2s$ variables has 
$\gg N^{2(s-r)}$ many solutions with $|x_i|,|y_i| \leq N$.
\end{lemma}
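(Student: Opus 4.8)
The plan is a pigeonhole argument on the values taken by the system. Set $\Phi(\xv,\yv) = (\xv^T B_1\yv,\dots,\xv^T B_r\yv) \in \Z^r$, and for a parameter $M$ let $\rho_M(\vv)$ denote the number of $(\xv,\yv) \in \Z^{2s}$ with $|x_i|,|y_i|\le M$ and $\Phi(\xv,\yv)=\vv$. Since $|\xv^T B_j\yv| \le \big(\sum_{i,k}|(B_j)_{ik}|\big)M^2$, the point $\Phi(\xv,\yv)$ lies in a box containing only $O(M^{2r})$ lattice points (the implied constant depending on the $B_j$), whereas $\sum_\vv \rho_M(\vv) = (2M+1)^{2s}$. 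Pigeonhole therefore produces some $\vv_0$ with $\rho_M(\vv_0) \gg M^{2(s-r)}$; the catch is that a priori $\vv_0$ need not equal $\nullv$.

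To remove that catch I would show that the fibre over $\nullv$ is, up to the usual harmless rescaling of the box, at least as large as every other fibre. Fix $\xv$: the set $\Lambda_\xv = \{\yv\in\Z^s : \Phi(\xv,\yv)=\nullv\}$ is a subgroup of $\Z^s$, being the kernel of the homomorphism $\yv\mapsto\Phi(\xv,\yv)$, which is linear in $\yv$; and for each $\vv$ the fibre $\{\yv : \Phi(\xv,\yv)=\vv\}$ is either empty or a coset of $\Lambda_\xv$. A coset of any lattice meets $[-M,M]^s$ in no more points than the lattice meets $[-2M,2M]^s$: if $\yv_0,\yv_1$ lie in the coset and in the cube, then $\yv_1-\yv_0\in\Lambda_\xv$ has sup-norm at most $2M$, and $\yv\mapsto\yv-\yv_0$ is injective. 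This is exactly the homogenisation principle of Lemma \ref{lem-homog}(i), applied fibrewise; summing it over all $\xv$ with $|x_i|\le M$ gives $\rho_M(\vv)\le\rho_{2M}(\nullv)$ for every $\vv$.

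Putting the two steps together, $(2M+1)^{2s} = \sum_\vv \rho_M(\vv) \ll M^{2r}\,\rho_{2M}(\nullv)$, hence $\rho_{2M}(\nullv) \gg M^{2(s-r)}$; taking $M=\lfloor N/2\rfloor$ (legitimate since $N\ge 2$) and using $\rho_N(\nullv)\ge\rho_{2\lfloor N/2\rfloor}(\nullv)$ then gives the claimed lower bound $\gg N^{2(s-r)}$. I expect the only genuinely nontrivial point to be that middle step — the assertion that $\nullv$ is a ``popular'' value — which is why the fibrewise homogenisation is the heart of the matter, the rest being elementary pigeonhole and the counting of lattice points in a cube. (If some $B_j$ vanishes, the corresponding coordinate of $\Phi$ simply disappears and the estimate only improves.)
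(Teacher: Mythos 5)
Your argument is correct and is essentially the same as the paper's: you sum the fibre counts over the $O(N^{2r})$ attainable values of $(\xv^T B_1\yv,\dots,\xv^T B_r\yv)$, use homogenisation (Lemma~\ref{lem-homog}(i), applied fibrewise in $\yv$ for each fixed $\xv$) to bound every fibre by the zero fibre at scale $2M$, and divide. The only difference is presentational --- you spell out the fibrewise coset comparison and the box-rescaling explicitly, whereas the paper simply invokes Lemma~\ref{lem-homog} and absorbs the rescaling into the phrase ``essentially bounded''.
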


\begin{proof}
Lemma \ref{lem-homog} shows us that the number of solutions to the system
$\xv^TB_j\yv=h_j$ for arbitrary fixed $h_j$ is essentially bounded by 
the number of solutions to the system $\xv^TB_j\yv=0$.
If we now consider $h_j$ to be variables as well, which have the range
$|h_j| \leq C_jN^2$ for some large enough $C_j > 0$ (depending on $B_j$), then the 
total number of solutions to the system $\xv^TB_j\yv=h_j$ is $N^{2s}$ since
we can choose $x_i$ and $y_i$ freely and this choice fixes 
the values of all $h_j$. We obtain
\begin{align*}
N^{2s} = & \sum_{\hv} \# \{x_i,y_i: \xv^TB_j\yv=h_j\}
\ll \sum_{\hv} \# \{x_i,y_i: \xv^TB_j\yv=0\} \\
\ll & N^{2r} \# \{x_i,y_i: \xv^TB_j\yv=0\}.
\end{align*}
\end{proof}

\begin{lemma}[Diagonal system] \label{lem-diag}
The system
\begin{align*}
d_1x_1y_1 + d_2x_2y_2 = d_3x_3y_3 + d_4x_4y_4,\\
e_1x_1y_1 + e_2x_2y_2 = e_3x_3y_3 + e_4x_4y_4,
\end{align*}
has $O(N^4 (\log N)^2)$ solutions with $|x_i|,|y_i| \leq N$ if and only if 
every $2\times3$ submatrix of 
\begin{align*}
\begin{pmatrix}
d_1 & d_2 & d_3 & d_4 \\ 
e_1 & e_2 & e_3 & e_4
\end{pmatrix}
\end{align*}
has rank two.
\end{lemma}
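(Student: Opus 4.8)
The plan is to linearise the problem by treating each product $x_iy_i$ as a single integer variable $z_i$ with $|z_i|\leq N^2$, so that the system turns into a pair of linear equations in $z_1,z_2,z_3,z_4$ whose coefficient matrix is the displayed $2\times4$ matrix. I would then prove the two halves of the equivalence separately: that the rank condition forces the bound $O(N^4(\log N)^2)$, by a Cauchy--Schwarz ``grouping'' together with the divisor estimate of Lemma \ref{lem-divest}; and that any failure of the rank condition already produces $\gg N^5$ solutions, by the lower bound of Lemma \ref{lem-low}.

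For the forward direction the decisive preliminary step is linear-algebraic: \emph{if every $2\times3$ submatrix has rank two, then the columns $c_i=(d_i,e_i)^{T}$ can be split into two pairs, each of which is linearly independent}. Indeed, if no two columns are proportional any pairing works, while if, say, $c_1\parallel c_2$, then rank two of the submatrices on columns $\{1,2,3\}$ and $\{1,2,4\}$ forces $c_3$ and $c_4$ each to be non-proportional to $c_1$, so that $\{c_1,c_3\}$ and $\{c_2,c_4\}$ do the job. Relabelling the variable pairs $(x_i,y_i)$ changes neither the solution count nor the hypothesis, so I may assume the pairing is $\{1,2\}\cup\{3,4\}$, the system keeps its displayed shape (now with arbitrary integer coefficients), and $\{c_1,c_2\}$, $\{c_3,c_4\}$ are each independent. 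Letting $f(a,b)$ be the number of $(x_1,y_1,x_2,y_2)$ with $d_1x_1y_1+d_2x_2y_2=a$ and $e_1x_1y_1+e_2x_2y_2=b$, and $g(a,b)$ the number of $(x_3,y_3,x_4,y_4)$ with $d_3x_3y_3+d_4x_4y_4=a$ and $e_3x_3y_3+e_4x_4y_4=b$, the solution count is $\sum_{a,b}f(a,b)g(a,b)\leq\bigl(\sum_{a,b}f(a,b)^2\bigr)^{1/2}\bigl(\sum_{a,b}g(a,b)^2\bigr)^{1/2}$ by Cauchy--Schwarz. Now $\sum_{a,b}f(a,b)^2$ counts octuples with $d_1u+d_2v=0=e_1u+e_2v$, where $u=x_1y_1-x_1'y_1'$ and $v=x_2y_2-x_2'y_2'$; independence of $c_1,c_2$ forces $u=v=0$, i.e.\ $x_1y_1=x_1'y_1'$ and $x_2y_2=x_2'y_2'$ on disjoint variable sets, so $\sum_{a,b}f(a,b)^2\ll(N^2\log N)^2$ by two applications of Lemma \ref{lem-divest}; the same bound holds for $g$, and multiplying the square roots yields $O(N^4(\log N)^2)$.

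For the converse, suppose some $2\times3$ submatrix, which after relabelling I may take to be the one on columns $\{1,2,3\}$, has rank at most one. Then there is $(\lambda,\mu)\in\Z^2\setminus\{0\}$ with $\lambda(d_1,d_2,d_3)+\mu(e_1,e_2,e_3)=(0,0,0)$, so the equation obtained by adding $\lambda$ times the first defining equation to $\mu$ times the second collapses to $(\lambda d_4+\mu e_4)x_4y_4=0$. If $\lambda d_4+\mu e_4\neq0$, the system is equivalent to $x_4y_4=0$ together with a single (possibly trivial) bilinear equation in $x_1,y_1,x_2,y_2,x_3,y_3$: the former has $\gg N$ solutions and the latter $\gg N^4$ by Lemma \ref{lem-low} with $r=1$, $s=3$, for $\gg N^5$ in total. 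If $\lambda d_4+\mu e_4=0$, then $(\lambda,\mu)$ annihilates all four columns, the full matrix has rank at most one, the system reduces to one bilinear equation in all eight variables, and Lemma \ref{lem-low} with $r=1$, $s=4$ gives $\gg N^6$ solutions. In either case the count is not $O(N^4(\log N)^2)$, so the bound must fail whenever a $2\times3$ submatrix drops rank; relabelling covers all four such submatrices, completing this direction.

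The step I expect to be the real obstacle is the linear-algebra reduction in the forward direction, i.e.\ checking that the hypothesis really does supply a partition of the columns into two independent pairs: this is precisely what lets the diagonal sum $\sum_{a,b}f(a,b)^2$ factor into two independent divisor problems to which Lemma \ref{lem-divest} applies. If one were forced to group a proportional pair, that sum would no longer factor and the argument would stall. Everything else --- the linearisation $z_i=x_iy_i$, the Cauchy--Schwarz inequality, and the bookkeeping with Lemmas \ref{lem-divest} and \ref{lem-low} --- is routine.
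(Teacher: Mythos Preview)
Your argument is correct and follows essentially the same route as the paper's proof: the same Cauchy--Schwarz symmetrisation after grouping the four columns into two independent pairs, the same reduction of $\sum_{a,b}f(a,b)^2$ to the decoupled system $x_1y_1=x_1'y_1'$, $x_2y_2=x_2'y_2'$ handled by Lemma~\ref{lem-divest}, and the same use of Lemma~\ref{lem-low} for the converse. You actually supply a justification for the column-pairing step that the paper only asserts (``we can rearrange matters such that\ldots'').
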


\begin{proof}
Let us first assume that every $2\times 3$ submatrix has rank two.
This implies that we can rearrange matters such that
the submatrices  
$\begin{pmatrix}
d_1 & d_2 \\ 
e_1 & e_2
\end{pmatrix}$
and
$\begin{pmatrix}
d_3 & d_4 \\ 
e_3 & e_4
\end{pmatrix}$
have rank two.
Write the number of solutions to the system as a sum and perform a simple 
Cauchy-Schwarz symmetrisation of the coefficients. 
\begin{align*}
& \sum_{|\xv| \leq N} \sum_{|\yv| \leq N} \sum_{\substack{n,m \\ d_1x_1y_1 + d_2x_2y_2 = n=d_3x_3y_3 + d_4x_4y_4 \\ e_1x_1y_1 + e_2x_2y_2 =m= e_3x_3y_3 + e_4x_4y_4}} 1 \\
= & \sum_{n,m} \Bigg(\sum_{\substack{d_1x_1y_1 + d_2x_2y_2 = n \\ e_1x_1y_1 + e_2x_2y_2 = m}} 1 \Bigg) 
\cdot
\Bigg(\sum_{\substack{d_3x_3y_3 + d_4x_4y_4=n \\ e_3x_3y_3 + e_4x_4y_4=m}} 1 \Bigg)\\
\leq & \left(\sum_{n,m} \Bigg(\sum_{\substack{d_1x_1y_1 + d_2x_2y_2 = n \\ e_1x_1y_1 + e_2x_2y_2 = m}}  1 \Bigg)^2 \right)^{1/2} \cdot 
\left(\sum_{n,m} \Bigg(\sum_{\substack{d_3x_3y_3 + d_4x_4y_4=n \\ e_3x_3y_3 + e_4x_4y_4=m}} 1 \Bigg)^2 \right)^{1/2}
\end{align*}
The first term corresponds to the system
\begin{align*}
d_1x_1y_1 + d_2x_2y_2 = d_1x_3y_3 + d_2x_4y_4,\\
e_1x_1y_1 + e_2x_2y_2 = e_1x_3y_3 + e_2x_4y_4,
\end{align*}
and by taking linear combinations, we can reduce this to
\begin{align*}
x_1y_1 = x_3y_3,\\
x_2y_2 = x_4y_4,
\end{align*}
which has the required number of solutions by Lemma \ref{lem-divest}.
An equivalent argument takes care of the other term.

In the case that there is a $2 \times 3$ submatrix with rank at most one, we
can simplify the system to 
\begin{align*}
d_1x_1y_1 + d_2x_2y_2 &= d_3x_3y_3 + d_4x_4y_4,\\
0 &= e_4x_4y_4.
\end{align*}
If $e_4=0$ we can show by the argument from Lemma \ref{lem-low}
that the first equation has $\gg N^{6}$ solutions. In the case $e_4 \neq 0$
we have $x_4=0$ (or $y_4=0$), which reduces the first equation to six variables
and a factor of $\gg N$ from the summation over $y_4$ (or $x_4$). Again by Lemma
\ref{lem-low} we have at least $\gg N^{5}$ solutions.  
\end{proof}

The last ingredient is another simple observation about systems of linear equations.

\begin{lemma} \label{lem-sum}
Let $A \yv = \cv$ be a linear equations system, where all equations
are independent of $y_s$ apart from the last equation. In other words,
the last column of $A$ is a non-zero multiple of the standard basis vector 
$\ev_s$. Then the number of solutions to this system is bounded by
the number of solutions to $A'\yv'=\cv'$, where we get $A'$ by removing the last column
of $A$ and $\yv',\cv'$ by removing the last entry of $\yv$ and $\cv$.  
\end{lemma}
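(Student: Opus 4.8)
The plan is to show that forgetting the last coordinate yields an injection from the solution set of $A\yv=\cv$ into the solution set of the reduced system $A'\yv'=\cv'$, which immediately gives the claimed inequality between the solution counts (and explains why we only get $\leq$ rather than $=$).

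First I would fix notation. Write $A\in\Z^{r\times s}$ with rows $\av_1,\ldots,\av_r$; the hypothesis that the last column of $A$ is a non-zero multiple of $\ev_s$ means there is a $k\in\Z\setminus\{0\}$ with $a_{i,s}=0$ for $i\leq r-1$ and $a_{r,s}=k$. Hence the first $r-1$ equations of $A\yv=\cv$ do not involve $y_s$ at all, and they are precisely the equations of the reduced system $A'\yv'=\cv'$, where $A'$ is the submatrix of $A$ formed by the first $r-1$ rows and first $s-1$ columns, and $\yv',\cv'$ are obtained from $\yv,\cv$ by dropping the last entry (so that the shortened right-hand side $\cv'$ matches the $r-1$ rows of $A'$).

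Next I would verify the injection. If $\yv=(y_1,\ldots,y_s)$ solves $A\yv=\cv$, then its truncation $\yv'=(y_1,\ldots,y_{s-1})$ solves $A'\yv'=\cv'$, since those equations are among the ones already satisfied by $\yv$ and they are blind to $y_s$. This truncation map is injective: if $\yv$ and $\tilde\yv$ are two solutions with the same truncation $\yv'$, the last equation forces $k y_s = c_r-\sum_{j=1}^{s-1}a_{r,j}y_j = k\tilde y_s$, and since $k\neq 0$ we conclude $y_s=\tilde y_s$, so $\yv=\tilde\yv$. Therefore the number of solutions of $A\yv=\cv$ is at most the number of solutions of $A'\yv'=\cv'$.

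There is essentially no obstacle here; the only things to be careful about are the bookkeeping of which rows and columns are deleted (one must drop the last row together with the last column so that the reduced system is consistent with $\cv'$) and the observation that an equality of counts should not be expected: over $\Z$ a given solution $\yv'$ of the reduced system extends to a solution of the full system only when $k\mid c_r-\sum_{j<s}a_{r,j}y_j$, which may fail.
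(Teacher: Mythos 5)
Your argument is correct and is essentially the paper's own proof, just spelled out more explicitly: the paper's one-line justification (``for fixed $y_1,\ldots,y_{s-1}$ there is at most one value of $y_s$ satisfying the last equation'') is precisely the injectivity of your truncation map. You also correctly read the lemma as dropping the last row together with the last column, which is what the dimensions require.
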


\begin{proof}
For fixed values of the variables $y_1,\ldots,y_{s-1}$, there is at most one
value of $y_s$ that makes the last equation true.
\end{proof}

\section{The starting point} \label{Sec-start}

It turns out that it is sufficient to prove the main result for $s = 6$
in order to get it for all $s \geq 6$ as we will see in Section \ref{Sec-deduction}.
We therefore begin with the system of two bilinear equations in two times six variables 
\begin{align*}
B_1(\xv,\yv)=B_2(\xv,\yv)=0.
\end{align*}
By taking linear combinations of the two equations, we can assume that rank$(B_1) = 6$
as long as we are not in the first exceptional case of Theorem \ref{Main-thm}.

One way to look at the system is to consider them as linear equations in $\yv$ with
coefficients being linear forms in $\xv$. We get
\begin{align*}
K_1(\xv)y_1 + K_2(\xv)y_2 + K_3(\xv)y_3 + K_4(\xv)y_4 + K_5(\xv)y_5 + K_6(\xv)y_6 = 0,\\
L_1(\xv)y_1 + L_2(\xv)y_2 + L_3(\xv)y_3 + L_4(\xv)y_4 + L_5(\xv)y_5 + L_6(\xv)y_6 = 0.
\end{align*}
By a change of coordinates (Lemma \ref{lem-coord}), 
we can assume that the first equation is diagonal. This simplifies the situation to
\begin{align*}
x_1y_1 + x_2y_2 + x_3y_3 + x_4y_4 + x_5y_5 + x_6y_6 = 0,\\
L_1(\xv)y_1 + L_2(\xv)y_2 + \ldots + L_5(\xv)y_5 + L_6(\xv)y_6 = 0.
\end{align*}

The main difficulty to overcome is the interdependence of the two equations. 
Our goal will be to either extract a system with separated variables 
or one with a diagonal structure.

One possible way to force independence is to set set $x_1=x_2=0$.
With $\xv' = (0,0,x_3,x_4,x_5,x_6)$
Lemma \ref{lem-homog} (ii) gives us a factor of $O(N^2)$ and the system
\begin{align*}
x_3y_3 + x_4y_4 + x_5y_5 + x_6y_6 = 0,\\
L_1(\xv')y_1 + L_2(\xv')y_2 + L_3(\xv')y_3 + \ldots + L_6(\xv')y_6 = 0.
\end{align*}
This makes the first equation independent of $y_1$ and $y_2$.
By Lemma \ref{lem-homog} (iii), we can remove the dependence of the second
equation on $y_3,\ldots,y_6$, which leaves us with
\begin{align*}
x_3y_3 + x_4y_4 + x_5y_5 + x_6y_6 = 0,\\
L_1(\xv')y_1 + L_2(\xv')y_2  = 0.
\end{align*}
Now that the second equation is independent of $y_5$ and $y_6$, we can do the same
thing to the first line and obtain the majorising system
\begin{align*}
x_5y_5 + x_6y_6 = 0,\\
L_1(\xv')y_1 + L_2(\xv')y_2  = 0.
\end{align*}
If $L_1$ and $L_2$ depend on $(x_3,x_4)$ in
a non-singular way (see below), a final change of variables would give us the system
\begin{align*}
x_5y_5 + x_6y_6 = 0,\\
x_3y_1 + x_4y_2  = 0.
\end{align*}
Since we achieved independence, Lemma \ref{lem-divest} gives us $O(N^4(\log N)^2)$ solutions.
To obtain the final bound, we collect the $O(N^2)$ contribution from our first step
and the $O(N^2)$ from the sum over $(y_3,y_4)$.\\

Let us explore the conditions under which the above argument works.
Write $l_{ij}$ for the $j$th coefficient of $L_i$. 
So that $L_1(\xv) = l_{11}x_1 + \ldots + l_{16}x_6$ and $L_2(\xv) = l_{21}x_1 + \ldots + l_{26}x_6$.
If the matrix$\begin{pmatrix}l_{13} & l_{14} \\ l_{23} & l_{24}\end{pmatrix}$ has rank two,
then the change of variables from $(L_1(\xv'),L_2(\xv'))$ to $(x_3,x_4)$ will be successful.

To understand the complementary case, we observe that we made some arbitrary choices along the way.
Consider the `off-diagonal' matrix build from 
the coefficients of the linear forms $L_1$ and $L_2$, given by
\begin{align} \label{eq-offdiag-matrix}
\begin{pmatrix}
l_{13} & l_{14} & l_{15} & l_{16} \\ 
l_{23} & l_{24} & l_{25} & l_{26}
\end{pmatrix}.
\end{align}
Whenever this matrix has full rank, the above strategy will also work by choosing a
a possibly different pair of indices than $3$ and $4$, which corresponds to the 
special case that the first two columns are linearly independent.
So in order for this not to work, we need that matrix \eqref{eq-offdiag-matrix}
has rank at most one.

On the other hand, we can set any pair of variables $\{x_i,x_j\}$
equal to zero in the first step of the argument, not necessarily $x_1$ and $x_2$.
Since the matrix \eqref{eq-offdiag-matrix} sits in the upper right corner of $B_2$, 
this translates (by permuting the variables) into the following 
rank condition for the matrix $B_2$:
Any off-diagonal matrix in $B_2$ has rank at most one.
('Off-diagonal' means that it doesn't contain any diagonal elements.)

Write $ \vv \otimes \wv := \vv \cdot \wv^T$, $\ev_i$ to be the $i$th standard basis vector
and define the \emph{off-rank} of a matrix to be the maximal rank of an off-diagonal submatrix.
We have the following classification of off-rank one matrices.

\begin{lemma} \label{Rank1-structure}
A matrix $B \in \Z^{s \times s}$ with off-rank one has the form
\begin{enumerate}
\item[(i)] $B = D + \vv \otimes \wv$,
\item[(ii)] $B = D + \vv \otimes \ev_i + \ev_i \otimes \wv$ or
\item[(iii)] $B = D + E$,
\end{enumerate}
where $D$ is a diagonal matrix, $\vv,\wv \in \Q^s$ and $E \in \Z^{s \times s}$ has
non-zero entries only in a $3 \times 3$ submatrix, which is based on the diagonal.
\end{lemma}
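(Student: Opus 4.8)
The plan is to analyze the condition "every off-diagonal submatrix has rank $\leq 1$" by starting from the largest off-diagonal submatrices and working out what structure they force. Write $B = (b_{ij})$ and let $D$ be the diagonal part of $B$, so that $E := B - D$ has zero diagonal; the claim is equivalent to a structural statement about $E$. The key observation is that for any two disjoint index sets $I, J$ with $|I|, |J| \geq 2$, the submatrix $E_{I \times J}$ (which is entirely off-diagonal) has rank $\leq 1$. I would first treat the case $s \leq 3$ separately, where case (iii) applies trivially with $E$ itself being the $3\times 3$ (or smaller) block. So assume $s \geq 4$.

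The main step is to understand the off-diagonal entries $b_{ij}$ ($i \neq j$) as a function of $(i,j)$. Fix four distinct indices $i,j,k,l$. Then $\begin{pmatrix} b_{ik} & b_{il} \\ b_{jk} & b_{jl} \end{pmatrix}$ is off-diagonal, hence singular, so $b_{ik}b_{jl} = b_{il}b_{jk}$. This multiplicative relation, holding for all such quadruples, is exactly the statement that the "generic" off-diagonal part behaves like a rank-one matrix $\vv \otimes \wv$, i.e.\ $b_{ij} = v_i w_j$ for $i \neq j$, \emph{provided} enough of the $b_{ij}$ are nonzero to pin down $\vv$ and $\wv$ consistently. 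I would argue: if the bipartite-type graph on $\{1,\dots,s\}$ whose edges are the pairs $\{i,j\}$ with $b_{ij} \neq 0$ or $b_{ji} \neq 0$ is "large" (spread over $\geq 4$ vertices in a connected way), then the relations force $b_{ij} = v_i w_j$ for all $i \neq j$ with suitable $v_i, w_j \in \Q$, giving case (i) with $B = D' + \vv \otimes \wv$ after absorbing the (possibly altered) diagonal into $D'$. The degenerate configurations — where the support of $E$ is confined to a single row and column through some index $i$ (giving case (ii), $E = \vv \otimes \ev_i + \ev_i \otimes \wv$) or to a $3 \times 3$ block on the diagonal (case (iii)) — are precisely the ones where the multiplicative constraints fail to propagate because too many entries vanish.

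Concretely I would organize the case split by the structure of the support $S = \{(i,j) : i \neq j,\ b_{ij} \neq 0\}$. If $S$ has nonzero entries in at least two rows and at least two columns and is not contained in one row $\cup$ one column $\cup$ a $3\times3$ diagonal block, I claim one can choose a "spanning" set of quadruple relations to express every $b_{ij} = v_i w_j$; the vanishing entries are then automatically consistent because a zero $b_{ij}$ forces either $v_i = 0$ or $w_j = 0$ in a way compatible with the rest. This yields (i). Otherwise, if $S$ lives in row $i$ together with column $i$, we directly read off (ii). If $S$ is confined to a $3 \times 3$ principal block, we get (iii). The remaining small cases ($S$ in a single row, or a single column, or empty) are special cases of (i) with $\vv$ or $\wv$ a standard basis vector, or of (ii)/(iii).

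The main obstacle I anticipate is the bookkeeping in the "large support" case: showing that the local rank-one relations $b_{ik}b_{jl} = b_{il}b_{jk}$ genuinely glue into a single global factorization $b_{ij} = v_i w_j$ over all off-diagonal $(i,j)$, rather than just on the support, and handling the borderline supports (two nonzero rows but only through a shared column, etc.) that should collapse into (ii) or (iii). One must be careful that the factorization is consistent across zero entries and that one does not accidentally double-count a configuration that is simultaneously of type (ii) and type (iii). I would handle this by always choosing, when possible, a "pivot" quadruple with all four entries nonzero to fix $\vv$ and $\wv$, and then verifying the remaining entries one at a time; the cases where no such pivot exists are exactly where the support is too sparse, and those fall into (ii) or (iii) by a short direct inspection.
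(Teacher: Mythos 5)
Your plan is built on the right algebraic kernel: the condition that every $2\times 2$ off-diagonal minor vanishes, i.e.\ $b_{ik}b_{jl}=b_{il}b_{jk}$ for distinct $i,j,k,l$, is exactly what the paper's argument in Appendix~\ref{AppA} exploits. In that sense the two approaches are the same in spirit. However, your proposal has a genuine gap precisely at the step you flag as ``the main obstacle.'' You assert that once $\vv,\wv$ are fixed on the support of $E$, ``the vanishing entries are then automatically consistent because a zero $b_{ij}$ forces either $v_i=0$ or $w_j=0$.'' This is the crux of the lemma and is not automatic: a zero entry $b_{ij}$ imposes $v_iw_j=0$ only if you can produce an off-diagonal $2\times 2$ minor relating $b_{ij}$ to known nonzero entries, and whether such a minor exists depends delicately on the support configuration. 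Asserting consistency without exhibiting those minors leaves the key claim unproved. Likewise, your dichotomy (``large connected support'' gives~(i), otherwise~(ii)/(iii)) is stated informally; you would still have to show it is exhaustive, and the borderline configurations you mention in passing are exactly the content of the lemma.

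A second, more tactical issue: you propose to anchor the factorization on a ``pivot quadruple with all four entries nonzero.'' Such a quadruple need not exist even when the matrix has off-rank one with sizeable support. For example, if the nonzero off-diagonal entries are confined to a single row (as in $B=D+\ev_1\otimes\wv$), no off-diagonal $2\times 2$ minor has all four entries nonzero, yet this is a perfectly good instance of case~(i) (and also of~(ii)). The paper's normalization is more economical: it assumes only a \emph{single} nonzero off-diagonal entry $r=b_{12}$, places it at position $(1,2)$ by a permutation, and then uses minors of the form $\begin{pmatrix} r & m_j \\ w_i & c_{ij}\end{pmatrix}$ to pin down the lower-right corner $C=\wv\mv^T/r+D$ (Lemma~\ref{lem-C-diag}), followed by a parallelism argument for the border vectors and a finite case analysis on the scalars $\alpha,\beta,\lambda,\mu$. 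If you want to complete your proof along your own lines, you should replace ``pivot quadruple'' by ``a single nonzero off-diagonal pivot,'' prove a concrete gluing statement in the style of Lemma~\ref{lem-C-diag}, and then enumerate the sparse configurations where the gluing breaks down; you will find that this reconstructs essentially the paper's case tree.
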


\begin{proof}
See Appendix \ref{AppA}.
\end{proof}

\begin{example}
To get a better feeling for this concept, we give examples of the three possible cases.
\begin{align*}
\begin{pmatrix}
0 & 2 & 2 & 2 & 1\\ 
3 & -1 & 6 & 6 & 3\\
0 & 0 & 6 & 0 & 0 \\ 
1 & 2 & 2 & -5 & 1\\
1 & 2 & 2 & 2 & 3
\end{pmatrix},
\begin{pmatrix}
3 & 10 & 0 & 0 & 0\\ 
3 & 7 & 5 & 2 & 8\\
0 & -2 & 5 & 0 & 0\\ 
0 & -3 & 0 & 1 & 0\\
0 & 2 & 0 & 0 & -4
\end{pmatrix},  
\begin{pmatrix}
1 & 17 & 2 & 0 & 0\\ 
3 & -1 & 1 & 0 & 0\\
4 & -1 & 6 & 0 & 0\\ 
0 & 0 & 0 & 6 & 0\\
0 & 0 & 0 & 0 & 2
\end{pmatrix}.
\end{align*}
We have $\vv = (1,3,0,1,1)^T$ and $\wv=(1,2,2,2,1)^T$ for the first example.
\end{example}

Each of the next three sections is dealing with one of the cases in \ref{Rank1-structure}. 

\begin{remark}
The off-rank zero case for $B_2$ is covered by any of the following sections.
\end{remark}

\section{Diagonal case (i)}

In order to understand the structure of $B_2 = D + \vv \otimes \wv$ we introduce
new variables $h = \xv^T \vv$ and $l = \wv^T \yv$. Then the bilinear system
transforms into
\begin{align*}
x_1y_1 + x_2y_2 + x_3y_3 + x_4y_4 + x_5y_5 + x_6y_6 &= 0,\\
d_1x_1y_1 + d_2x_2y_2 + d_3x_3y_3 + d_4x_4y_4 + d_5x_5y_5 + d_6x_6y_6 &= hl,\\
v_1x_1 + v_2x_2 + v_3x_3 + v_4x_4 + v_5x_5 + v_6x_6 &= h,\\
w_1y_1 + w_2y_2 + w_3y_3 + w_4y_4 + w_5y_5 + w_6y_6 &= l.
\end{align*}
This system has now the advantage of being diagonal, while having
a higher complexity due to the two additional linear equations. 

The exact behaviour of this system depends on the coefficients $d_i, v_i$ and $w_i$.
We use Lemma \ref{lem-homog} to set $x_1=y_2=0$ similar to the procedure 
in the non-degenerate case in Section \ref{Sec-start}.
The linear equations in the system

\begin{align*}
 x_3y_3 + x_4y_4 + x_5y_5 + x_6y_6 &= 0,\\
 d_3x_3y_3 + d_4x_4y_4 + d_5x_5y_5 + d_6x_6y_6 &= hl,\\
v_2x_2 + v_3x_3 + v_4x_4 + v_5x_5 + v_6x_6 &= h,\\
w_1y_1 + w_3y_3 + w_4y_4 + w_5y_5 + w_6y_6 &= l,
\end{align*}
can be dealt with by Lemma \ref{lem-sum} as long as
the coefficients $v_2$ and $w_1$ are non zero.
We end up with the reduced problem of bounding the solutions to
\begin{align*}
 x_3y_3 + x_4y_4 + x_5y_5 + x_6y_6 &= 0,\\
 d_3x_3y_3 + d_4x_4y_4 + d_5x_5y_5 + d_6x_6y_6 &= hl.
\end{align*}
By Lemma \ref{lem-diag} (and Lemma \ref{lem-homog}) we have $O(N^6(\log N)^2)$ solutions as long as
not all $d_i$ are equal. Together with the $O(N^2)$ contribution from Lemma \ref{lem-homog}
in the first step, we obtain the result.\\

As in the previous section, we need to analyse the argument to obtain a good description
of the complementary case.
The method words if $v_2,w_1 \neq 0$ and $d_i \neq d_j$ for some $i,j \in \{3,4,5,6\}$.
By symmetry (remaining of variables), we can perform the argument with different sets of indices as well.

The first step succeeds, therefore, if there are $v_i$ and $w_j$ with $i \neq j$,
which are both non-zero. Let us explore the complementary situation.\\

{\bf Case 1}: $v_i\cdot w_j = 0$ for all $i \neq j$.\\
This implies that either $\vv = \nullv$, $\wv = \nullv$ or that
$\vv$ and $\wv$ have only one non-zero component with the same index.\\

{\bf Case 1.1}: $\wv = \nullv$ ($\vv = \nullv$ is equivalent by symmetry).\\
Since $B_2 = D + \vv \otimes \wv$, we get the system
\begin{align*}
x_1y_1 + x_2y_2 + x_3y_3 + x_4y_4 + x_5y_5 + x_6y_6 &= 0,\\
d_1x_1y_1 + d_2x_2y_2 + d_3x_3y_3 + d_4x_4y_4 + d_5x_5y_5 + d_6x_6y_6 &= 0.
\end{align*}
Lemma \ref{lem-diag} and Lemma \ref{lem-homog} give the right answer
as long as the $d_i$ take on three different values.
If there are only two different values for $d_i$, at least three of the
coefficients have to be the same and a linear combination (with a renaming of variables) 
brings us to
\begin{align*}
x_1y_1 + x_2y_2 + x_3y_3 + x_4y_4 + x_5y_5 + x_6y_6 &= 0,\\
d_4x_4y_4 + d_5x_5y_5 + d_6x_6y_6 &= 0,
\end{align*}
where $d_4,d_5 \in \{0,d_6\}$. By Lemma \ref{lem-homog} (iii) this simplifies
further to
\begin{align*}
x_1y_1 + x_2y_2 = 0 = d_4x_4y_4 + d_5x_5y_5 + d_6x_6y_6.
\end{align*}
If $d_5=d_6 \neq 0$ or  $d_4=d_6 \neq 0$, then we get the correct upper bound
by Lemma \ref{lem-divest}.
Otherwise, we end up with at most one non-zero coefficient, which brings us
into the second exceptional case of Theorem \ref{Main-thm}.\\

{\bf Case 1.2}: $w_j=v_i=0$ for $i,j \geq 2$ (similar cases by coordinate change).\\
The linear equations simplify to $v_1x_1 = h$ and $w_1y_1 = l$
and the whole system changes into
\begin{align*}
x_1y_1 + x_2y_2 + x_3y_3 + x_4y_4 + x_5y_5 + x_6y_6 &= 0,\\
(d_1-v_1w_1)x_1y_1 + d_2x_2y_2 + d_3x_3y_3 + d_4x_4y_4 + d_5x_5y_5 + d_6x_6y_6 &= 0.
\end{align*}
This is the same situation we faced in the previous case and can be dealt with accordingly.\\

Now we are going to discuss the second part of the general argument in this section, 
where we needed that at least one of the coefficients $d_i$ is non-zero for $3 \leq i \leq 6$.
What happens if this is not the case?\\

{\bf Case 2}: $v_1,w_2 \neq 0$, but $d_3=d_4=d_5=d_6=0$.\\
The system simplifies to
\begin{align*}
x_1y_1 + x_2y_2 + x_3y_3 + x_4y_4 + x_5y_5 + x_6y_6 &= 0,\\
d_1x_1y_1 + d_2x_2y_2 &= hl,\\
v_1x_1 + v_2x_2 + v_3x_3 + v_4x_4 + v_5x_5 + v_6x_6 &= h,\\
w_1y_1 + w_2y_2 + w_3y_3 + w_4y_4 + w_5y_5 + w_6y_6 &= l.
\end{align*}
If any one of the coefficients $v_3,\ldots,v_6$ is non-zero, we can perform
the same argument to conclude that $d_1=0$.
A non-zero coefficient among $w_3,\ldots,w_6$ implies $d_2 = 0$.
This would imply that $hl=0$ and lead to the system
\begin{align*}
x_1y_1 + x_2y_2 + x_3y_3 + x_4y_4 + x_5y_5 + x_6y_6 &= 0,\\
(\vv^T \cdot \xv)(\wv^T \cdot \yv) &= 0.
\end{align*}
It corresponds to the degenerate case $(2)$ in Theorem \ref{Main-thm}.
Therefore, we may assume (for example) that $w_3,\ldots,w_6$ are all zero.
\begin{align*}
x_1y_1 + x_2y_2 + x_3y_3 + x_4y_4 + x_5y_5 + x_6y_6 &= 0,\\
d_1x_1y_1 + d_2x_2y_2 &= hl,\\
v_1x_1 + v_2x_2 + v_3x_3 + v_4x_4 + v_5x_5 + v_6x_6 &= h,\\
w_1y_1 + w_2y_2 &= l.
\end{align*}
Assume for now that $v_3 \neq 0$. If we set $x_2=y_3=0$ by 
using Lemma \ref{lem-homog}, we end up with
\begin{align*}
x_1y_1 + x_4y_4 + x_5y_5 + x_6y_6 &= 0,\\
d_1x_1y_1 &= hl,\\
v_1x_1 + v_3x_3 + v_4x_4 + v_5x_5 + v_6x_6 &= h,\\
w_1y_1 + w_2y_2 &= l.
\end{align*}
The variables $x_3$ and $y_2$ have non-zero coefficients and appear only in linear equations. 
This allows us to use Lemma \ref{lem-sum} to reduce the system to
\begin{align*}
x_1y_1 + x_4y_4 + x_5y_5 + x_6y_6 &= 0,\\
d_1x_1y_1 &= hl,
\end{align*}
where the number of solutions is bounded by $O(N^6(\log N)^2)$ by Lemma \ref{lem-diag}
as long as $d_1 \neq 0$.

The same argument works if one of $v_4,\ldots,v_6$ is non-zero. Therefore,
we are doing fine, except when $d_1=0$ or $v_3=v_4=v_5=v_6=0$.\\

{\bf Case 2.1}: $v_3=v_4=v_5=v_6=0$.\\
By replacing the auxiliary variables $h$ and $l$, the system is now given by
\begin{align*}
x_1y_1 + x_2y_2 + x_3y_3 + x_4y_4 + x_5y_5 + x_6y_6 &= 0,\\
d_1x_1y_1 + d_2x_2y_2 &= (v_1x_1 + v_2x_2)(w_1y_1 + w_2y_2).
\end{align*}
Since the second equation is independent of the variables $y_5$ and $y_6$,
we can use  Lemma \ref{lem-homog} (iii) and Lemma \ref{lem-divest}
to bound the contribution of the first equation by $O(N^6\log N)$
independent of the variables $x_1,y_1,x_2,y_2$ and consider the equation
\begin{align*}
d_1x_1y_1 + d_2x_2y_2 &= (v_1x_1 + v_2x_2)(w_1y_1 + w_2y_2)
\end{align*}
on its own.
If the rank of the corresponding matrix is two, then Lemma \ref{lem-coord}
and Lemma \ref{lem-divest} will give the correct upper bound.
If, on the other hand, the rank is one, the we are again in the 
exceptional case $(2)$ in Theorem \ref{Main-thm}.\\

{\bf Case 2.2}: $d_1=0$ and $v_3 \neq 0$ (for example).\\
We took another small step forward in removing one more coefficient
from the second bilinear equation. The system now looks like
\begin{align*}
x_1y_1 + x_2y_2 + x_3y_3 + x_4y_4 + x_5y_5 + x_6y_6 &= 0,\\
d_2x_2y_2 &= hl,\\
v_1x_1 + v_2x_2 + v_3x_3 + v_4x_4 + v_5x_5 + v_6x_6 &= h,\\
w_1y_1 + w_2y_2 &= l.
\end{align*}
We can also assume that $d_2 \neq 0$ since the complementary case is
covered earlier in `Case 2'.
The final case analysis is whether $w_1 = 0$ or not.
If $w_1=0$, we obtain
\begin{align*}
x_1y_1 + x_2y_2 + x_3y_3 + x_4y_4 + x_5y_5 + x_6y_6 &= 0,\\
(d_2x_2-w_2h)y_2 &= 0,\\
v_1x_1 + v_2x_2 + v_3x_3 + v_4x_4 + v_5x_5 + v_6x_6 &= h.
\end{align*}
If we insert the linear equation into the second equation, we see that the corresponding
matrix has rank one. Therefore, we are in the exceptional case $(2)$ of Theorem \ref{Main-thm}.

If $w_1 \neq 0$, on the other hand, we can set $x_1 = 0=y_3$ by Lemma \ref{lem-homog}
and reduce the problem to
\begin{align*}
x_2y_2 + x_4y_4 + x_5y_5 + x_6y_6 &= 0,\\
d_2x_2y_2 &= hl,\\
v_2x_2 + v_3x_3 + v_4x_4 + v_5x_5 + v_6x_6 &= h,\\
w_1y_1 + w_2y_2 &= l.
\end{align*}
The variables $x_3$ and $y_1$ have non-zero coefficients and Lemma \ref{lem-sum}
allows us to remove the linear equations.
The remaining system
\begin{align*}
x_2y_2 + x_4y_4 + x_5y_5 + x_6y_6 &= 0,\\
d_2x_2y_2 &= hl,
\end{align*}
has $O(N^{6}(\log N)^2)$ solutions by Lemma \ref{lem-diag}.

\section{Parameter Case (ii)}

Now we have $B = D + \vv \otimes \ev_i + \ev_i \otimes \wv$. By a change of variables,
we can assume that $i=1$ and obtain the form
\begin{align*}
x_1y_1 + x_2y_2 + x_3y_3 + x_4y_4 + x_5y_5 + x_6y_6 &= 0,\\
d_1x_1y_1 + d_2x_2y_2 + d_3x_3y_3 + d_4x_4y_4 + d_5x_5y_5 + d_6x_6y_6 &= y_1L(\xv)+x_1M(\yv)
\end{align*}
for the linear forms $L(\xv)=\vv^T\xv$ and $M(\yv)=\wv^T\yv$.
The approach here is similar to the one in the previous section.

First we set $x_1=y_1=0$ with the help of Lemma \ref{lem-homog} and analyse the
simpler system
\begin{align*}
x_2y_2 + x_3y_3 + x_4y_4 + x_5y_5 + x_6y_6 &= 0,\\
d_2x_2y_2 + d_3x_3y_3 + d_4x_4y_4 + d_5x_5y_5 + d_6x_6y_6 &= 0.
\end{align*}
If the $d_i$ take on more than two values, we are done by Lemma \ref{lem-diag}.
Otherwise, we can take linear combinations and simplify further to
\begin{align*}
x_2y_2 + x_3y_3 + x_4y_4 + x_5y_5 + x_6y_6 &= 0,\\
d_5x_5y_5 + d_6x_6y_6 &= 0,
\end{align*}
where $d_5 \in \{0,d_6\}$ (after a renaming of variables).

If $d_5 = d_6 \neq 0$, we are given the right upper bound by Lemma \ref{lem-diag} again.
Otherwise, we have $d_5 = 0$ and have found that our original system must have the form
\begin{align*}
x_1y_1 + x_2y_2 + x_3y_3 + x_4y_4 + x_5y_5 + x_6y_6 &= 0,\\
d_6x_6y_6 &= y_1L(\xv)+x_1M(\yv)
\end{align*}

Here, we can use Lemma \ref{lem-homog} to set $x_1=0$. This makes the second equation 
independent of $y_2,\ldots,y_5$. By Lemma \ref{lem-homog} (iii), this implies
that we can simplify the system to
\begin{align*}
x_2y_2+x_3y_3 + x_4y_4 + x_5y_5 &= 0,\\
d_6x_6y_6 &= y_1 L(0,x_2,\ldots,x_6).
\end{align*}
If $v_i \neq 0$ for some $i \in \{2,3,4,5\}$, we can apply Lemma \ref{lem-homog} (iii)
again to remove the term $x_iy_i$ from the first equation and then 
change coordinates with Lemma \ref{lem-coord} to obtain (here $i=2$ for example)
\begin{align*}
x_3y_3 + x_4y_4 + x_5y_5 &= 0,\\
d_6x_6y_6 &= y_1 x_2,
\end{align*}
which has the right upper bound for the number of solutions by Lemma \ref{lem-diag}
as long as $d_6 \neq 0$.

This implies that we have the correct upper bound, except if $d_6=0$ or 
$L(\xv)=L(x_1,0,0,0,0,x_6)$.
A symmetric argument gives us the same conclusion with the condition 
$d_6=0$ or $M(\yv)=M(y_1,y_6)$.\\

{\bf Case 1}: $d_6 = 0$.\\
The system now simplifies to 
\begin{align*}
x_1y_1 + x_2y_2 + x_3y_3 + x_4y_4 + x_5y_5 + x_6y_6 &= 0,\\
y_1L(\xv)+x_1M(\yv) &= 0.
\end{align*}
Here we need a slightly unusual procedure. We set $h=L(\xv)$ and $l=M(\yv)$
to lift the system to
\begin{align*}
x_1y_1 + x_2y_2 + x_3y_3 + x_4y_4 + x_5y_5 + x_6y_6 &= 0,\\
y_1h+x_1l &= 0,\\
L(\xv) &= h,\\
M(\yv) &= l.
\end{align*}
Now we perform Lemma \ref{lem-homog} (iii) two times. One time with the set
$\{x_2,\ldots,x_6\}$ and a second time with $\{y_2,\ldots,y_6\}$.
This leaves us with the homogeneous system
\begin{align*}
x_2y_2 + x_3y_3 + x_4y_4 + x_5y_5 + x_6y_6 &= 0,\\
y_1h+x_1l &= 0,\\
L(0,x_2,\ldots,x_6) &= 0,\\
M(0,y_2,\ldots,y_6) &= 0.
\end{align*}
The second equation has four independent variables, which gives $O(N^2\log N)$
by Lemma \ref{lem-divest}.
The remaining system is an intersection of a diagonal bilinear form in $2\cdot5$
variables with two linear equations. The resulting bilinear form has rank at least three
and if those two equations aren't degenerate, we have the correct upper bound by 
Lemma \ref{lem-divest}.

Degenerate means here that one of the original linear forms $L$ of $M$ has to depend
only on $x_1$ or $y_1$. This would lead to a system of the shape
\begin{align*}
x_1y_1 + x_2y_2 + x_3y_3 + x_4y_4 + x_5y_5 + x_6y_6 &= 0,\\
(v_1y_1+M(\yv))x_1 &= 0,
\end{align*}
(or the equivalent for $M(\yv)=w_1y_1$), which has rank one in the second equation
and corresponds to the exceptional case $(2)$ in Theorem \ref{Main-thm}.\\

{\bf Case 2}: $v_i=w_i=0$ for $i \in \{2,3,4,5\}$.\\
In this case, we are left with the system
\begin{align*}
x_1y_1 + x_2y_2 + x_3y_3 + x_4y_4 + x_5y_5 + x_6y_6 &= 0,\\
(v_1+w_1)x_1y_1+d_6x_6y_6 &= v_6x_6y_1+w_6x_1y_6.
\end{align*}

We have seen this before in Case 2.1 of the previous section. Lemma \ref{lem-homog}
with Lemma \ref{lem-diag} are sufficient to deal with it.

\section{Pertubation Case (iii)}

In this last case we have $B = D + E$, where (we can assume that)
$E$ has only non-zero entries in the upper left $3\times3$ corner.
This corresponds to a system of the form
\begin{align*}
x_1y_1 + x_2y_2 + x_3y_3 + x_4y_4 + x_5y_5 + x_6y_6 &= 0,\\
L_1(\xv')y_1 + L_2(\xv')y_2 + L_3(\xv')y_3 + d_4x_4y_4 + d_5x_5y_5 + d_6x_6y_6 &= 0,
\end{align*}
with $\xv' = (x_1,x_2,x_3,0,0,0)$.

We use Lemma \ref{lem-homog} to set $x_1=x_2=0$ and $y_1=y_2=0$, which reduces the problem
to a diagonal one of the form
\begin{align*}
x_3y_3 + x_4y_4 + x_5y_5 + x_6y_6 &= 0,\\
l_{33}x_3y_3 + d_4x_4y_4 + d_5x_5y_5 + d_6x_6y_6 &= 0.
\end{align*}
By Lemma \ref{lem-diag} we can deal with this situation, if 
the coefficients $l_{33},d_4,d_5,d_6$ take on three different values.
Otherwise, we can assume that $d_5=d_6$.

Taking linear combinations in the original system, we therefore can simplify
our problem to 
\begin{align*}
x_1y_1 + x_2y_2 + x_3y_3 + x_4y_4 + x_5y_5 + x_6y_6 &= 0,\\
L_1(\xv')y_1 + L_2(\xv')y_2 + L_3(\xv')y_3 + d_4x_4y_4 &= 0.
\end{align*}
Since the second equations doesn't depend on $y_5$ and $y_6$
Lemma \ref{lem-homog} (iii) reduces the problem further to
\begin{align*}
x_5y_5 + x_6y_6 &= 0,\\
L_1(\xv')y_1 + L_2(\xv')y_2 + L_3(\xv')y_3 + d_4x_4y_4 &= 0.
\end{align*}
Both equations are independent of each other. The first gives a bound of
$O(N^2\log N)$ by Lemma \ref{lem-divest} and the second is fine as well
by the same argument (with a coordinate change before),
as long as the corresponding matrix has rank at least two.
Otherwise we are in the exceptional case (2) of Theorem \ref{Main-thm}.

\section{Extension to $s > 6$}  \label{Sec-deduction}

What happens, when the number of variables is larger then six? Either every linear combination
of the two matrices has rank at most five, which brings us to the exceptional case (1) in 
Theorem \ref{Main-thm}, or we can find a change of coordinates, such that our system looks
like 
\begin{equation} \label{eq-sys1}
\begin{split} 
x_1y_1 + \ldots + x_6y_6 + d_7x_7y_7 + \ldots + d_sx_sy_s= 0,\\
L_1(\xv)y_1 + \ldots + L_6(\xv)y_6 + L_7(\xv)y_7 +\ldots + L_s(\xv)y_s= 0.
\end{split}
\end{equation}
with $d_i \in \{0,1\}$.
Setting $x_i=y_i=0$ for all $i \geq 7$ by Lemma \ref{lem-homog}, 
we can use the result for $s=6$ to see that we either get the general result or that we can
add a multiple of the first equation to ensure that $L_1,\ldots,L_6$ are multiples of
each other.

We can apply the same argument for any set of six variables for which $d_i \neq 0$.
This results in the following structure for some value $f \geq 6$.
\begin{align*}
x_1y_1 + \ldots + x_fy_f = 0,\\
L(\xv)y_1 + \ldots + L(\xv)y_f + L_{f+1}(\xv)y_{f+1} +\ldots + L_s(\xv)y_s= 0.
\end{align*}
By Lemma \ref{lem-homog} this can be reduced to 
\begin{align*}
x_1y_1 + \ldots + x_fy_f = 0,\\
L_{f+1}(\xv)y_{f+1} +\ldots + L_s(\xv)y_s= 0,
\end{align*}
and we are done, as long as these linear forms $L_j$ are not all multiples of each other.

In the complementary case, the rank of the second equation in \eqref{eq-sys1} is at most
two. If it is less than two, we are done. 
Otherwise we perform a suitable change of coordinates, swap the equations, and obtain the form
\begin{align*}
x_1y_1 + x_2y_2= 0,\\
\tilde{L}_1(\xv)y_1 + \ldots + \tilde{L}_s(\xv)y_s= 0.
\end{align*}
for some other linear forms $\tilde{L}_1,\ldots,\tilde{L}_s$.
Since the second equation must have rank at least four, we can find two linear forms $L_i$ and $L_j$
for $i > j > 2$, which are linearly independent. 
An application of Lemma \ref{lem-homog} gives the system
\begin{align*}
x_1y_1 + x_2y_2= 0,\\
L_i(\xv)y_i + L_j(\xv)y_j= 0
\end{align*}
and we are done.
This is the end of the proof for Theorem \ref{Main-thm}.

\appendix

\section{Proof of Lemma \ref{Rank1-structure}} \label{AppA}

Let $B$ be a matrix with off-rank one.
By permuting variables, if necessary, we can assume that $B$
has the form
\begin{align*} \label{eq-matrix-decomp}
B = \begin{pmatrix}
a & r & \mv^T\\
s & b & \nv^T\\
\vv & \wv & C
\end{pmatrix},
\end{align*}
where $r \neq 0$ and $\mv,\nv,\vv,\wv \in \Z^{s-2}$. The following lemma
is the first step to understand the structure of $B$.

\begin{lemma} \label{lem-C-diag}
For the above matrix we have
\begin{align*}
C = \wv\mv^T/r + D,
\end{align*} 
where $D$ is a diagonal matrix.
\end{lemma}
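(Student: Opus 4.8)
The plan is to recover every off-diagonal entry of $C$ from a single well-chosen $2\times 2$ off-diagonal submatrix of $B$, namely the one passing through the distinguished nonzero entry $r$ in position $(1,2)$.

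Fix a pair of indices $i,j\in\{1,\dots,s-2\}$ with $i\neq j$, and consider the submatrix of $B$ formed by rows $1$ and $i+2$ and columns $2$ and $j+2$. With the block notation above this submatrix is
\begin{align*}
\begin{pmatrix} B_{1,2} & B_{1,j+2}\\ B_{i+2,2} & B_{i+2,j+2}\end{pmatrix}
= \begin{pmatrix} r & m_j\\ w_i & C_{ij}\end{pmatrix}.
\end{align*}
The first thing I would check is that this is an \emph{off-diagonal} submatrix of $B$: the diagonal entries of $B$ in rows $1$ and $i+2$ sit in columns $1$ and $i+2$, and neither of $2$ or $j+2$ equals $1$ or $i+2$ (here one uses $i+2\geq 3$, $j+2\geq 3$, and $j\neq i$). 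Hence the hypothesis that $B$ has off-rank one forces this $2\times 2$ matrix to have rank at most one, so its determinant vanishes:
\begin{align*}
r\,C_{ij} - m_j w_i = 0.
\end{align*}

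Since $r\neq 0$ we may divide, obtaining $C_{ij} = w_i m_j/r = (\wv\mv^T/r)_{ij}$ for all $i\neq j$. Thus the matrix $D := C - \wv\mv^T/r$ has all off-diagonal entries equal to zero, i.e. $D$ is diagonal, and $C = \wv\mv^T/r + D$ as claimed. (When $s\leq 3$ there are no admissible pairs $i\neq j$ and the statement is vacuous, a $1\times 1$ or empty matrix being trivially diagonal.)

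The only point requiring genuine care — and the main obstacle — is the index bookkeeping verifying that the chosen submatrix avoids every diagonal position of $B$, so that the off-rank-one hypothesis actually applies to it; once that is in place the conclusion is a one-line determinant computation that does not even use the vectors $\nv$ or the entries $a,b,s$.
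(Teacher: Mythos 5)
Your proof is correct and is essentially identical to the paper's: both pick the $2\times 2$ off-diagonal submatrix of $B$ with entries $r, m_j, w_i, c_{ij}$, invoke the off-rank-one hypothesis to force its determinant to vanish, and divide by $r\neq 0$ to get $c_{ij}=w_im_j/r$. You are merely more explicit about the index bookkeeping confirming the submatrix is genuinely off-diagonal, which the paper leaves implicit.
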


\begin{proof}
Consider the $2\times 2$ submatrix $\begin{pmatrix} r & m_j \\ w_i & c_{ij} \end{pmatrix}$
for some $i \neq j$.
Since the off-rank is one, this matrix has rank at most one. Since $r \neq 0$
it must be at least one. A short calculation shows that $c_{ij}=w_im_j/r$.
\end{proof}

Since the off-rank of $B$ is one, we also get that 
$\vv = \lambda \uv, \wv = \mu \uv$ and $\mv = \alpha \kv, \nv = \beta \kv$
for some $\uv,\kv \in \Z^{s-2}\backslash\{\nullv\}$ and $\lambda, \mu,\alpha,\beta \in \Q$.
We obtain
\begin{align*}
B = \begin{pmatrix}
a & r & \alpha \kv^T\\
s & b & \beta \kv^T\\
\lambda \uv & \mu \uv & C
\end{pmatrix},
\end{align*}

{\bf Case 1}: $\alpha = 0$.\\
This implies that C is diagonal.\\

{\bf Case 1.1}: There are $i \neq j$ such that $u_i \neq 0$ and $k_j \neq 0$\\
Consider the off-diagonal matrix 
$\begin{pmatrix}
s & \beta k_j \\
\lambda u_i & c_{ij}  
\end{pmatrix}$.
Since $c_{ij}=0$, we conclude that $\beta = 0$ or $\lambda = 0$.\\

{\bf Case 1.1.1}: $\lambda = 0$.\\
We are in case $(ii)$ of Lemma \ref{Rank1-structure} and done.\\

{\bf Case 1.1.2}: $\beta = 0$ and $\lambda \neq 0$.\\
If $\mu = 0$, we are in case (ii) again. Otherwise
we have to show that we can choose entries $x,y$ in 
\begin{align*}
\begin{pmatrix}
x & r & \nullv^T\\
s & y & \nullv^T\\
\lambda \uv & \mu \uv & O
\end{pmatrix},
\end{align*}
such that the resulting matrix has rank one.
Choose $x=r\lambda/\mu$ and $y=s \mu/\lambda$.
It follows that $B = D + \vv \otimes \wv$ for some $\vv,\wv \in \Q^s$
and a diagonal matrix $D$, which corresponds to case (i).\\

{\bf Case 1.2}: For all $i \neq j$ we have $u_i = 0$ or $k_j = 0$.

The condition implies that $\uv = \nullv, \kv = \nullv$ or that 
there is at most one index $i$ such that 
$u_i\neq 0$ and $k_i \neq 0$. \\

{\bf Case 1.2.1}: $\uv = \nullv$.\\
We are in case (ii) of Lemma \ref{Rank1-structure}.\\

{\bf Case 1.2.2}: $\kv = \nullv$.\\
This brings us back to the Cases 1.1.1 and 1.1.2.\\

{\bf Case 1.2.3}: $u_j=k_j=0$ for all $j \neq i$ for some fixed $i$.\\
This implies that only the $1$st, $2$nd and $i$th
row/column have non-zero non-diagonal entries, which brings us into case (iii).\\

{\bf Case 2}: $\mu = 0$.\\
This is completely analogous to Case 1.\\

{\bf Case 3}: $\alpha \neq 0$ and $\mu \neq 0$.\\
Now there is at least one non-diagonal entry $c_{ij} \neq 0$. Consider the matrix
$\begin{pmatrix}
s & \beta k_j \\
\lambda u_i & c_{ij}  
\end{pmatrix}$.
We know that $c_{ij} = \mu u_i \alpha k_j/r$. This implies that the matrix
can have rank one only if $s = \lambda\beta u_ik_j/c_{ij}=\frac{r\lambda\beta}{\mu \alpha}$.
If we consider $B$ modulo diagonal matrices, we see that we can choose
$x$ and $y$ such that
\begin{align*}
\begin{pmatrix}
x & r & \alpha \kv^T\\
r\lambda\beta/\mu \alpha & y & \beta \kv^T\\
\lambda \uv & \mu \uv & \alpha\mu\uv\kv^T/r
\end{pmatrix},
\end{align*}
has rank one by setting $x = r\lambda/\mu$ and $y = r\beta/\alpha$.
This gives us case (i) in Lemma \ref{Rank1-structure}.


\begin{thebibliography}{HD}


\bibitem{Birch}
B. J. Birch, 
\newblock{\em Forms in many variables,} 
\newblock{Proc. Roy. Soc. Ser. A 265 (1961), 245--263.}


\bibitem{Dietmann}
R. Dietmann,
\newblock{\em Systems of rational quadratic forms,}
\newblock{Arch. Math. (Basel) 82 (2004), no. 6, 507--516.}


\bibitem{Schindler}
D. Schindler,
\newblock{\em Bihomogeneous forms in many variables,}
\newblock{J. Th\'eorie Nombres Bordeaux, to appear.}


\bibitem{Schmidt}
W. M. Schmidt,
\newblock{\em Simultaneous rational zeros of quadratic forms.}
\newblock{Seminar on Number Theory, Paris 1980-81 (Paris, 1980/1981), pp. 281--307,}
\newblock{Progr. Math., 22, Birkhäuser, Boston, Mass., 1982.} 


 

\end{thebibliography}
\end{document}